\theoremstyle{plain}
\newtheorem{thm}{Theorem}[section]
\newtheorem{theorem}[thm]{Theorem}
\newtheorem{remark}[thm]{Remark}
\numberwithin{equation}{section}
\title[Comments on Sampson's approach]{Comments on Sampson's approach toward Hodge conjecture on Abelian varieties} 
\author{Tuyen Trung Truong}
\thanks{}
\subjclass[2010]{14Kxx, 14Cxx, 32xxx}
\date{\today}
\keywords{Abelian varieties; Hodge conjecture}
\address{School of Mathematics, Korea Institute for Advanced Study, Seoul 130-722, Republic of Korea}\email{truong@kias.re.kr}
\address{Current address: School of Mathematical Sciences, The University of Adelaide, SA 5005, Australia}\email{tuyen.truong@adelaide.edu.au}
\begin{document}

\maketitle

\begin{abstract}
\noindent Let $A$ be an Abelian variety of dimension $n$. For $0<p<2n$ an odd integer, Sampson constructed a surjective homomorphism $\pi :J^p(A)\rightarrow A$, where $J^p(A)$ is the higher Weil Jacobian variety of $A$. Let $\widehat{\omega}$ be a fixed form in $H^{1,1}(J^p(A),\mathbb{Q})$, and $N=\dim (J^p(A))$. He observes that if the map $\pi _*(\widehat{\omega }^{N-p-1}\wedge .): H^{1,1}(J^p(A),\mathbb{Q})\rightarrow H^{n-p,n-p}(A,\mathbb{Q})$ is injective, then the Hodge conjecture is true for $A$ in bidegree $(p,p)$. 

In this paper, we give some clarification of the approach and show that the map above is {not injective} except some special cases where the Hodge conjecture is already known. We propose a modified approach. 
\end{abstract}


\section{Introduction and results}

A compact complex manifold $X$ is projective if it is a submanifold of a complex projective space $\mathbb{P}^{N}$. The Hodge conjecture is the following statement 

{\bf Hodge conjecture.} Let $X$ be a projective manifold. If $u\in H^{2p}(X,\mathbb{Q})\cap H^{p,p}(X)$ then $u$ is a linear combination with rational coefficients of the classes of algebraic cycles on $X$.

There have been a lot of works on the conjecture, however, it is still very largely open (see \cite{lewis}). The case of Abelian varieties, on which the cohomology groups are explicitly described, have been extensively studied, see Appendix 2 in \cite{lewis}. In this case, also, the Hodge conjecture is still open, even though many partial results have been obtained. 

Sampson \cite{sampson} (see also Appendix 2 in \cite{lewis}) proposed one approach toward proving the Hodge conjecture for Abelian varieties using Weil Jacobians. He suggested that the Hodge conjecture would follow if a certain map is injective. In this paper we show that in general this is not the case. The main idea is that instead of showing that the map is not injective, we show that the map is not surjective. It turns out that this conclusion is also valid for a more general class of surjective homomorphisms of Abelian varieties. We also give some clarification on the construction in Section 10 of his paper and propose a modified approach.   

We will first recall some basic definitions. 

\subsection{Abelian varieties}
 Let $A=V/L$ be an Abelian variety of dimension $n$. Here $V=\mathbb{R}^{2n}$ is equipped with a complex structure $J:V\rightarrow V$ with $J^2=-1$, and $L$ is a lattice of rank $2n$. There is one alternating bilinear form $E:V\times V\rightarrow \mathbb{R}$ such that $E(Jx,Jy)=E(x,y)$, $E(x,Jy)$ is a symmetric and positive definite bilinear form on $V$, and $E(L,L)\subset \mathbb{Z}$. There is associated an integral K\"ahler form on $A$, given by the following formula
\begin{eqnarray*}
\omega =\sum _{i,j}E(e_i,e_j)dx^i\wedge dx^j.
\end{eqnarray*}
Here $e_1,\ldots ,e_{2n}$ are a basis for $V$, and $x^i$ is the real coordinate corresponding to $e_i$. The K\"ahler form $\omega$ does not depend on the choice of the basis. 

There is also associated a Hermitian metric
\begin{eqnarray*}
H(x,y)=E(x,Jy)-iE(x,y).
\end{eqnarray*}

For more on Abelian varieties, see \cite{lange-birkenhake}.

\subsection{Weil Jacobians}
Let $e_1,\ldots ,e_{2n}$ be a basis for the lattice $L$. Let $0<p<2n$ be an odd integer. Define 
\begin{eqnarray*}
\widehat{V}=\bigwedge ^pV.
\end{eqnarray*}
We define $\widehat{L}\subset\widehat{V}$ to be the lattice generated by the elements $e_I=\wedge _{i\in I}e_i$, where $I$ is a multi-index of length $p$. 

$J$ defines a complex structure $\widehat{J}$ on $\widehat{V}$ by the formula $\widehat{J}(e_I)=\wedge _{i\in I}Je_i$. 

$E$ defines a bilinear form $\widehat{E}$ on $\widehat{V}$ by the formula: $\widehat{E}(e_I,e_J)=\det (E(e_i,e_j))_{i\in I,j\in J}$. 

It can then be checked that $\widehat{E}$ is alternating, $\widehat{E}(\widehat{J}{x},\widehat{J}{y})=\widehat{E}(x,y)$, $\widehat{E}(\widehat{L},\widehat{L})\subset \mathbb{Z}$, and $\widehat{E}(e_I,\widehat{J}e_J)$ is symmetric and positive definite. Thus $J^p(A)=\widehat{V}/\widehat{L}$ is an Abelian variety. 

There is an injection $f:H^{1,1}(J^p(A),\mathbb{Z})\rightarrow H^{p,p}(A,\mathbb{Q})$, see Proposition 7 in \cite{sampson}.

\subsection{Sampson's construction}

Starting from the K\"ahler form $\omega$ associated with the bilinear from, Sampson defines a surjective homomorphism $\pi :\widehat{V}\rightarrow V$, which is $\mathbb{C}$-linear and preserves the lattice $\widehat{L}$. Thus it descends to a homomorphism $\pi : J^p(A)\rightarrow A$. 

The construction of Sampson is to assign directly
\begin{eqnarray*}
\pi (e_I)=\sum _{j=1}^{2n}b_{I}^je_j, 
\end{eqnarray*}
where $b_I^j$ comes from the coefficients of the form 
\begin{eqnarray*}
\omega ^{(p+1)/2},
\end{eqnarray*}
and the inverse of the matrix $(E(e_i,e_j))$. Then he uses explicit computations to show that the map $\pi$ is surjective and $\mathbb{C}$-linear. 

{\bf First comment.} Here is our first comment.  If we consider what happens with the pullback map $\pi ^*: H^1(A,\mathbb{R})\rightarrow H^1(J^p(A),\mathbb{R})$, then the above construction will look more transparent. In fact, let $x^i$ be the coordinate corresponding to $e_i$, and $x^I$ the coordinate corresponding to $e_I$. Then we have
\begin{eqnarray*}
\pi (\sum _Ix^Ie_I)=\sum _{j=1}^{2n}(\sum _{I}b_I^jx^I)e_j.
\end{eqnarray*}
Hence $x^j=\sum _Ib_I^jx^I$. From this, we obtain
\begin{eqnarray*}
\pi ^*(dx^j)=\sum _Ib_I^jdx^I.
\end{eqnarray*}
Here we recall that given a basis $(v_j)$ for a vector space,    with corresponding coordinates $z^j$, then the form $dz^j$ is given by $dz^j(v_i)=\delta _i^j$.
 
Now we make the following identification $\psi :H^1(J^p(A),\mathbb{R})\rightarrow H^p(A,\mathbb{R})$. We assign $\psi (dx^I)=\wedge _{i\in I}dx^i$. Then, by using a quasi-symplectic basis $e_1,\ldots ,e_{2n}$ for $L$, we obtain a very simple formula
\begin{eqnarray*}
\psi \circ \pi ^*(dx^j)=cdx^j\wedge \omega ^{(p-1)/2}.
\end{eqnarray*}
Here $c$ is a non-zero constant. Thus we see that $\psi \circ \pi ^*$ is, up to a multiplicative constant, the Lefschetz map. 

By the Lefschetz isomorphism theorem (see Lecture 11 in \cite{lewis}), $\psi \circ \pi ^*$ is injective, and hence $\pi $ is surjective. The property that $\pi$ is $\mathbb{C}$-linear can also be checked by choosing the basis $Je_1,\ldots ,Je_{2n}$ in the definition of the map $\psi \circ\pi ^*$. 

\subsection{Non-surjectivity of the pushforward $\pi _*$}

Let the notation be as in the previous subsections. Sampson's proposed approach is as follows. Let $Z\subset J^p(A)$ be a subvariety of appropriate dimension. If the map
\begin{eqnarray*}
\iota :~\alpha \in H^{p,p}(A,\mathbb{Q})\mapsto f(\alpha )\in H^{1,1}(J^p(A),\mathbb{Q}) \mapsto \pi _*(f(\alpha ).Z)\in H^{p,p}(A,\mathbb{Q})
\end{eqnarray*}
is injective, then it is also surjective and the Hodge conjecture follows. 

{\bf A clarification.} By the Poincare theorem, the map $\pi$ is, up to isogeny, of the form $pr_A:~Ker(\pi )\times A'\rightarrow A$,  where the projection from $A'\rightarrow A$ is an isogeny. In Section 10 in \cite{sampson}, Sampson proposed to use $Z=Z'\times A'$, where $Z'$ is a subvariety of $Ker (\pi )$. However, we can see easily from a dimensional consideration that such a choice can not be appropriate. In fact, if $f(\alpha )$ is a hypersurface in $J^p(A)$ which intersects $Z'\times A'$, then $f(\alpha )$ will intersect $p\times A'$ for at least one point $p\in Z'$. But then $f(\alpha ).Z'\times A'$ has dimension at least $f(\alpha ).p\times A'$, and the latter has dimension at least $dim(A)-1$. Hence the projection of $f(\alpha ).Z'\times A'$ to $A$ contains a hypersurface of $A$, and is not of the desired codimension. 

From the discussion given in Section 10 in \cite{sampson}, and given that in general we do not know much about $H^{*,*}(A,\mathbb{Q})$ and $H^{*,*}(J^p(A),\mathbb{Q})$ except the existence of an ample class, a natural choice of $Z$ is to be the self-intersection of an ample class on $J^p(A)$. We now show that in this case, the map $\iota$ is in general not surjective, and hence it is also not injective (by dimensional considerations). While it is not easy to see directly whether the map $\iota $ is surjective or not (since the definitions of the maps $\pi$ and $f$ are highly transcendental), it turns out that the answer to a more general question is available. 

\begin{theorem}
Let $\pi :\widehat{A}=\widehat{V}/\widehat{L}\rightarrow A=V/L$ be a surjective homomorphism of Abelian varieties. Let $\widehat{\omega}$ be a fixed form in $H^{1,1}(\widehat{A},\mathbb{Q})$. Let $J$ and $\widehat{J}$ be the complex structures on $A$ and $\widehat{A}$. Let $N=\dim (\widehat{A})$ and $n=\dim (A)$. Let $q$ be any integer with $1\leq q\leq n$.

1) If $\widehat{\omega}(u,\widehat{J}u)\not= 0$ for all $0\not= u\in H_1(\widehat{A},\mathbb{R})$, then 
\begin{eqnarray*}
\dim (\pi _*(\widehat{\omega }^{N-q-1}\wedge H^{1,1}(\widehat{A},\mathbb{Q})))\leq \dim H^{1,1}(A,\mathbb{Q}).
\end{eqnarray*}

2) For any $\widehat{\omega}$
\begin{eqnarray*}
\pi _*(\widehat{\omega }^{N-q-1}\wedge H^{1,1}(\widehat{A},\mathbb{Q}))\subset \mathbb{R}\otimes _{\mathbb{Z}}\bigwedge ^{n-q}H^{1,1}(A,\mathbb{Q}).
\end{eqnarray*}

In particular, if $\dim H^{n-q,n-q}(A,\mathbb{Q})>\dim \bigwedge ^{n-q}H^{1,1}(A,\mathbb{Q})$, then the map 
\begin{eqnarray*}
\pi _*(\widehat{\omega }^{N-q-1}\wedge .):H^{1,1}(\widehat{A},\mathbb{Q})\rightarrow H^{n-q,n-q}(A,\mathbb{Q})
\end{eqnarray*}
is not surjective.
\label{PropositionMain}\end{theorem}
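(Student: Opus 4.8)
The plan is to reduce everything to one Gaussian (Berezin) integration formula for the Gysin map of a product projection, and to read both statements off the shape of the resulting class. By the Poincar\'e reducibility recalled above, $\pi$ is isogenous to the projection $pr_A\colon K\times A\to A$ with $K=\ker(\pi)^0$ and $\dim K=N-n=:r$; an isogeny induces an isomorphism on rational cohomology that preserves Hodge structures, $(1,1)$-classes, the complex structures (hence the definiteness hypothesis of 1)), and $\pi_*$ up to a nonzero scalar, so it suffices to treat $\widehat A=K\times A$, $\pi=pr_A$. Using $H^\bullet(\widehat A,\Q)=\bigwedge^\bullet H^1(\widehat A,\Q)$ and $H^1(\widehat A,\Q)=H^1(K,\Q)\oplus H^1(A,\Q)$, the K\"unneth decomposition gives $\widehat\omega=\omega_K+\mu+\omega_A$ with $\omega_K\in H^{1,1}(K,\Q)$, $\omega_A\in H^{1,1}(A,\Q)$ a pullback class, and $\mu\in H^1(K,\Q)\otimes H^1(A,\Q)$ the rational mixed $(1,1)$-part, and likewise $\beta=\beta_K+\mu_\beta+\beta_A$. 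The point is that $\pi_*=pr_{A*}$ is precisely the Berezin integral on the Grassmann algebra $\bigwedge^\bullet H^1(K,\Q)$, which extracts the top component $H^{2r}(K)$.

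The central computation is the fermionic Gaussian identity
\[
\pi_*\!\left(e^{\widehat\omega}\wedge\beta\right)=e^{\omega_A}\wedge\pi_*\!\left(e^{\omega_K+\mu}\wedge\beta\right),\qquad \pi_*\!\left(e^{\omega_K+\mu}\right)=\mathrm{Pf}(\omega_K)\,e^{\theta},
\]
obtained by completing the square in the odd variables $H^1(K)$, where $\theta=\tfrac12\,\mu\,\omega_K^{-1}\mu\in\bigwedge^2 H^1(A,\Q)$ is the contraction of $\mu\otimes\mu$ by $\omega_K^{-1}$. One checks that $\theta$ is of type $(1,1)$ and rational, hence $\theta\in H^{1,1}(A,\Q)$, and that inserting the extra factor $\beta$ differentiates the Gaussian, giving $\pi_*(e^{\omega_K+\mu}\wedge\beta)=\mathrm{Pf}(\omega_K)\,(\kappa_\beta+\ell_\beta)\,e^{\theta}$ with $\kappa_\beta\in\Q$ and $\ell_\beta\in H^{1,1}(A,\Q)$ both linear in $\beta$ (the mixed part of $\beta$ contributes $\mu\,\omega_K^{-1}\mu_\beta$, the $K$-part contributes the variations of $\theta$ and of the Pfaffian, and $\beta_A$ enters directly). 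Under the definiteness hypothesis of 1), the restriction of $\widehat\omega$ to $H^1(K)$ is definite, so $\omega_K$ is nondegenerate and the identity applies. Extracting the $A$-degree $2(n-q)$ component and putting $\rho:=\omega_A+\theta\in H^{1,1}(A,\Q)$ yields, up to a nonzero constant,
\[
\pi_*\!\left(\widehat\omega^{\,N-q-1}\wedge\beta\right)=\kappa_\beta\,\rho^{\,n-q}+(n-q)\,\ell_\beta\wedge\rho^{\,n-q-1}.
\]
Since $\rho^{\,n-q}=\rho^{\,n-q-1}\wedge\rho$, the image lies in $\rho^{\,n-q-1}\wedge H^{1,1}(A,\Q)$, a space of dimension at most $\dim H^{1,1}(A,\Q)$; this is statement 1).

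For statement 2) the definiteness hypothesis is dropped, so $\omega_K$ may be degenerate. Here I would split $H^1(K,\Q)=U_0\oplus U_1$ with $U_0=\ker(\omega_K)$ (rational, of even dimension $2d$) and $\omega_K|_{U_1}$ nondegenerate, and integrate in two stages. The $U_1$-integration is the Gaussian above and contributes powers of a rational class $\theta_1\in H^{1,1}(A,\Q)$; the $U_0$-integration forces the top power of $\mu_0:=\mu|_{U_0}$ and returns $\Lambda=\bigwedge^{2d}(\mu_0)$ applied to a generator of $\bigwedge^{2d}U_0$. Because $\mu$ is a Hodge class, $\mu_0\colon U_0\to H^1(A,\Q)$ is a morphism of Hodge structures, so its image is a weight-one rational sub-Hodge-structure, i.e.\ $H^1(B,\Q)$ for an isogeny factor $B$ of $A$ of dimension $d$; then $\Lambda$ spans $H^{2d}(B,\Q)=\Q\,(p^*h)^{d}$ for a polarization $h$ of $B$, and hence is an $\R$-multiple of a product of $d$ rational divisor classes on $A$. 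Collecting the pieces, $\pi_*(\widehat\omega^{\,N-q-1}\wedge\beta)$ is an $\R$-combination of $(n-q)$-fold products of classes in $H^{1,1}(A,\Q)$, which is statement 2). The ``in particular'' then follows at once: the image lies in $\R\otimes_\Z\bigwedge^{n-q}H^{1,1}(A,\Q)$, whose real dimension is $\dim\bigwedge^{n-q}H^{1,1}(A,\Q)<\dim H^{n-q,n-q}(A,\Q)$ by hypothesis, so the map cannot be onto.

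The main obstacle is the Gaussian identity itself: making rigorous the ``completing the square'' step with anticommuting (Grassmann-valued) sources $\mu$, tracking the signs in the Berezin integral so that the Pfaffian and the quadratic form $\theta$ emerge correctly; and—most delicately—controlling the degenerate case, where one must justify the two-stage integration and identify $\Lambda$ with a polarization power on a rational isogeny factor of $A$, keeping everything defined over $\Q$.
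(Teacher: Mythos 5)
Your part 1) is correct, but it is worth seeing that it is the paper's own argument in different clothes. The paper does not pass through Poincar\'e reducibility or the K\"unneth cross term at all: it sets $W=\ker(\pi:\widehat V\to V)$ and splits $\widehat V=W\oplus W^{\perp}$ where $W^{\perp}$ is the orthogonal complement \emph{with respect to $\widehat E$ itself} (the hypothesis $\widehat\omega(u,\widehat Ju)\neq 0$ gives $W\cap W^{\perp}=0$). In that splitting $\widehat\omega=\omega_1+\pi^*(\alpha)$ has no cross term by construction, so the whole ``complete the square / $\theta=\frac12\mu\,\omega_K^{-1}\mu$'' step is absorbed into the choice of $W^{\perp}$: your $\rho=\omega_A+\theta$ is the paper's $\alpha$, namely the descent of $\widehat\omega|_{W^{\perp}}$. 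The paper then expands $(\omega_1+\pi^*\alpha)^{N-q-1}\wedge\pi^*(u_0)\wedge D$ and observes by a pure degree count that only the exponents $j=n-q-1$ and $j=n-q$ of $\pi^*\alpha$ survive, landing the image inside $\alpha^{n-q-1}\wedge H^{1,1}(A,\mathbb{Q})$ --- exactly your $\kappa_\beta\rho^{n-q}+(n-q)\ell_\beta\wedge\rho^{n-q-1}$. So for 1) the two proofs buy the same thing; the paper's version avoids the Pfaffian/Berezin formalism and the isogeny reduction entirely, which is cleaner, while yours makes the dependence on $\beta$ (your $\kappa_\beta,\ell_\beta$) more explicit.

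For part 2) you genuinely diverge from the paper, and this is where your sketch has a real gap. The paper simply writes $\widehat\omega=\lim_{t\to 0}\widehat\omega_t$ with each $\widehat\omega_t$ rational and satisfying the nondegeneracy hypothesis, applies part 1) to each $\widehat\omega_t$, and passes to the limit --- that is where the $\mathbb{R}\otimes_{\mathbb{Z}}$ comes from. Your two-stage Berezin integration over $U_0=\ker(\omega_K)$ and $U_1$ is not complete as stated: the $U_0$-directions need not all be saturated by powers of $\mu_0$, because the linear insertion $\beta$ can supply one or two of them (through the $U_0\otimes H^1(A)$ part of $\mu_\beta$ or the $U_0\wedge U_0$ and $U_0\wedge U_1$ parts of $\beta_K$). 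Those terms produce classes of the form $\bigwedge^{2d-1}(\mathrm{im}\,\mu_0)\wedge H^1(A,\mathbb{Q})$ or $\bigwedge^{2d-2}(\mathrm{im}\,\mu_0)\wedge(\cdots)$, and your identification ``top wedge of a weight-one sub-Hodge structure $=$ power of a polarization'' says nothing about these partial wedges; they are not individually of type $(d,d)$, so one would have to regroup terms before the polarization argument applies. This is precisely the ``delicate'' point you flag, but as written it is a missing step, not a routine verification. I would add, in fairness to your route: if you do close this gap, your argument is arguably more robust than the paper's, since the paper's limit argument presupposes that an arbitrary rational $(1,1)$-class can be approximated by rational classes whose Hermitian form is everywhere nonvanishing (hence definite); such classes form two open convex cones, and it is not clear that their union is dense in $H^{1,1}(\widehat A,\mathbb{R})$.
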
 

\begin{proof}

1) Let $\widehat{A}=\widehat{V}/\widehat{L} $ and  $A=V/L$. Let $\widehat{E}$ be the  alternating form corresponding to $\widehat{\omega}$. We define $W\subset \widehat{V}$ to be the kernel of the map $\pi :\widehat{V}\rightarrow V$. Because the map $\pi$ is $\mathbb{C}$-linear, it follows that $\widehat{J}W=W$. Moreover, since $\pi$ is surjective, $\dim (W)=2N-2n$.

 We observe that if $\pi ^*(du)\in \pi ^*H^1(A,\mathbb{R})$ and $v\in W$, then $$\pi ^*(du)(v)=du (\pi (v))=du (0)=0.$$

We let $\widehat{W}^{\perp}$ to be the orthogonal complement of $W$, with respect to $\widehat{E}$. Because $\widehat{E}(x,\widehat{J}x)>0$ for all $0\not= x\in \widehat{V}$, we have $W\cap W^{\perp}=0$. Therefore, we have the decomposition
\begin{eqnarray*}
\widehat{V}=W\oplus W^{\perp}.
\end{eqnarray*}
We note that $\dim (W^{\perp})=2n$.

We choose a basis $e_1,\ldots e_{2N-2n}$ for $W$, and $f_1,\ldots ,f_{2n}$ a basis for $W^{\perp}$. We let $x^1,\ldots ,x^{2N-2n}$ and $y^1,\ldots ,y^{2n}$ be the corresponding coordinates. Then we have the corresponding $1$-forms $dx^1,\ldots ,dx^{2N-2n}$ and $dy^1,\ldots ,dy.^{2n}$ on $\widehat{V}$. 

By definition, we have
\begin{eqnarray*}
dy^j(e_i)=0
\end{eqnarray*}
for all $i,j$. Comparing with the above computations and taking dimensions into consideration, we conclude that $\pi ^*H^1(A,\mathbb{R})$ is generated by $dy^1,\ldots ,dy^{2n}$.

From the discussion above, the form 
\begin{eqnarray*}
\widehat{\omega}&=&\sum \widehat{E}(e_i,e_j)dx^i\wedge dx^j+\sum \widehat{E}(f_i,f_j)dy^i\wedge dy^j+\sum \widehat{E}(e_i,f_j)dx^i\wedge dy^j\\
&=& \sum \widehat{E}(e_i,e_j)dx^i\wedge dx^j+\sum \widehat{E}(f_i,f_j)dy^i\wedge dy^j
\end{eqnarray*}
has no cross terms. By point 5) we see that we can write $\widehat{\omega}=\omega _1+\omega _2$, where $\omega _1$ involves only $dx^i$, and $\omega _2=\pi ^*(\alpha )\in \pi ^*H^{2}(A,\mathbb{R})$.

Moreover, we see that $\omega _1$ is the restriction of $\widehat{\omega}$ to $W$, and $\pi ^*(\alpha )$ is the restriction of $\widehat{\omega}$ to $W^{\perp}$. Since $W$ and $W^{\perp}$ are both invariant under the complex structure $\widehat{J}$, both forms $\omega _1$ and $\pi ^*(\alpha)$ are of type $(1,1)$. Then $\alpha$ is of bidegree $(1,1)$ also.

We also have that both $\omega _1$ and $\alpha$ are rational. This again follows easily from the fact that $\omega _1$ and $\pi ^*(\alpha )$ are the restrictions of $\widehat{\omega}$ to $W$ and $W^{\perp}$, and both $\widehat{L}\cap W$ and $\widehat{L}\cap W^{\perp}$ have maximal ranks.

We now estimate the dimension of the image of the map $$\pi _*(\widehat{\omega }^{N-q-1}\wedge .): H^{1,1}(\widehat{A},\mathbb{Q})\rightarrow H^{n-q,n-q}(A,\mathbb{Q}).$$ Let $u_0\in H^{q,q}(X,\mathbb{R})$. Then, for any divisor $D\in H^{1,1}(\mathbb{A},\mathbb{Q})$ 
\begin{eqnarray*}
\pi _*(\widehat{\omega }^{N-q-1}\wedge  D)\wedge u_0=\pi _*(\widehat{\omega }^{N-q-1}\wedge \pi ^*(u_0)\wedge D).
\end{eqnarray*}

Using $\widehat{\omega} =\omega _1+\pi ^*(\alpha )$, we have
\begin{eqnarray*}
\widehat{\omega }^{N-q-1}\wedge \pi ^*(u_0)\wedge D&=&(\omega _1+\pi ^*(\alpha ))^{N-q-1}\wedge \pi ^*(u_0)\wedge D\\
&=&\sum _{j}c_{j}\omega _1^{N-q-1-j}\wedge \pi ^*(\alpha )^{j}\wedge \pi ^*(u_0)\wedge D.
\end{eqnarray*}
Here $c_j\in \mathbb{N}$ are constants. Then, we have that the $j$-th summand in the above sum is zero, unless $j+q\leq n$ and $N-q-1-j\leq N-n$. Hence there are only two terms left
\begin{eqnarray*}
\widehat{\omega }^{N-q-1}\wedge \pi ^*(u_0)\wedge D=c_1\omega _1^{N-n-1}\wedge D\wedge \pi ^*(\alpha ^{n-q}\wedge u_0)+c_2\omega _1^{N-n}\wedge D\wedge \pi ^*(\alpha ^{n-q-1}\wedge u_0).
\end{eqnarray*}

This shows that 
\begin{eqnarray*}
\pi _*(\widehat{\omega }^{N-q-1}\wedge  D)=\alpha ^{n-q-1}\wedge [\pi _*(c_1\omega _1^{N-n-1}\wedge D)\wedge \alpha +c_2\pi _*(\omega _1^{N-n}\wedge D)].
\end{eqnarray*}

We note that
\begin{eqnarray*}
\pi _*(c_1\omega _1^{N-n-1}\wedge D)\wedge \alpha +c_2\pi _*(\omega _1^{N-n}\wedge D)\in H^{1,1}(A,\mathbb{Q}).
\end{eqnarray*}
From this it follows that $\pi _*(\widehat{\omega }^{N-q-1}\wedge H^{1,1}(\widehat{A},\mathbb{Q}))$ is contained in the image of the linear map
\begin{eqnarray*}
\alpha ^{n-q-1}\wedge .:H^{1,1}(A,\mathbb{Q})\rightarrow H^{n-q,n-q}(A,\mathbb{Q}).
\end{eqnarray*}

Therefore
\begin{eqnarray*}
\dim _{\mathbb{Q}}\pi _*(\widehat{\omega }^{N-q-1}\wedge H^{1,1}(\widehat{A},\mathbb{Q}))\leq \dim _{\mathbb{Q}}H^{1,1}(A,\mathbb{Q}).
\end{eqnarray*}

2) Now we consider a general form $\widehat{\omega}\in H^{1,1}(\widehat{A},\mathbb{Q})$. We can write 
\begin{eqnarray*}
\widehat{\omega}=\lim _{t\rightarrow 0}\widehat{\omega}_t.
\end{eqnarray*}
Here for $t\not= 0$, then the bilinear form $\widehat{E}_t$ of $\widehat{\omega}_t$ satisfies the condition $\widehat{E}_t(x,\widehat{J}x)\not= 0$ for $0\not= x\in \widehat{V}$. Let $D\in H^{1,1}(\widehat {A},\mathbb{Q})$. We have
\begin{eqnarray*}
\pi _*(\widehat{\omega }^{N-q-1}\wedge D)=\lim _{t\rightarrow 0}\pi _*(\widehat{\omega}_t^{N-q-1}\wedge D).
\end{eqnarray*}
By the proof of 1), for each $t\not= 0$
\begin{eqnarray*}
\pi _*(\widehat{\omega}_t^{N-q-1}\wedge D)\in\bigwedge ^{n-q}H^{1,1}(A,\mathbb{Q}).
\end{eqnarray*}
Therefore, 
\begin{eqnarray*}
\pi _*(\widehat{\omega }^{N-q-1}\wedge D)\in \mathbb{R}\otimes _{\mathbb{Z}}\bigwedge ^{n-q}H^{1,1}(A,\mathbb{Q}).
\end{eqnarray*}

\end{proof}

\begin{remark}

The proof of Theorem \ref{PropositionMain} shows that even if we choose 
\begin{eqnarray*}
Z=\sum _j\widehat{\omega}_j^{N-q-1},
\end{eqnarray*}
the map proposed by Sampson is not injective in general.

The following modification of the approach, requiring  that
\begin{eqnarray*}
\pi _*(\wedge ^{N-q}H^{1,1}(J^p(A),\mathbb{Q}))=H^{n-q,n-q}(A,\mathbb{Q})
\end{eqnarray*}
may work.

\end{remark}

\end{document}